\documentclass{amsart}
\usepackage{amscd,amssymb,graphicx,MnSymbol}
\setlength{\multlinegap}{0pt}

\usepackage{tikz}
\usepackage{bm}
\newcommand{\of}[1]{\ensuremath{\!\left({#1}\right)}}

\tikzset{jumpSingle/.style={thick,->}}
\tikzset{jumpFamily/.style={thick,dashed}}
\tikzset{family/.style={very thick}}
\tikzset{jumpSingle/.style={thick,->}}
\tikzset{jumpFamily/.style={thick,dashed}}
\tikzset{family/.style={very thick}}
\tikzset{algebra/.style={inner sep=0pt}}
\tikzset{algebra/.style={inner sep=0pt}}

\input{xy}
\xyoption{all}

\author[Fialowski]{Alice Fialowski}
\address{
Alice Fialowski\\
University of P\'ecs and
E\"otv\"os Lor\'and University\\ Hungary
}
\email{fialowsk@ttk.pte.hu, fialowsk@cs.elte.hu}

\author[Penkava]{Michael Penkava}
\address{
Michael Penkava\\
University of Wisconsin-Eau Claire\\
Eau Claire, WI 54702-4004} \email{penkavmr@uwec.edu}

\thanks{This research was supported by grants from the University of Wisconsin-Eau Claire. The final version of the paper was written during the stay of the first author at the Max-Planck-Institute f\''ur Mathematik Bonn.}%


\newtheorem{thm}{Theorem}[section]

\theoremstyle{definition}
\newtheorem{dfn}[thm]{Definition}

\def\R{\hbox{$\mathbb R$}}

\def\C{\hbox{$\mathbb C$}}
\def\tens{\otimes}

\def\GL{\mathbb{GL}}
\def\gl{\mathfrak{gl}}
\def\sl{\mathfrak{sl}}
\def\P{\mathbb P}


\def\ra{\rightarrow}

\def\ainf{\mbox{$A_\infty$}}
\def\linf{\mbox{$L_\infty$}}
\def\tns{\otimes}
\def\ph{\varphi}

\def\k{\mathbb K}

\def\inv{^{-1}}

\def\s#1{(-1)^{#1}}
\def\mcom{,\dots,}
\def\mplus{+\cdots+}


\def\zt{$\mathbb{Z}_2$}


\def\im{\operatorname{Im}}


\def\diag{\text{diag}}

\begin{document}
\setlength{\multlinegap}{0pt}
\title[Stratification of moduli spaces]{Stratification of moduli spaces of Lie algebras, similar matrices and bilinear forms}%

\address{}%
\email{}%

\thanks{}%
\subjclass{}%
\keywords{}%
\date{\today}
\begin{abstract}
In this paper, the authors apply a stratification of moduli spaces of complex Lie algebras to analyzing the moduli spaces of $n\times n$ matrices under scalar similarity and  bilinear forms under the cogredient action. For similar matrices, we give a complete description of a stratification of the space by
some very simple projective orbifolds of the form $\P^n/G$, where $G$ is a subgroup of the symmetric group $\Sigma_{n+1}$ acting on $\P^n$ by permuting the projective coordinates.  For bilinear forms, we give a similar stratification up to dimension 4.
\end{abstract}
\maketitle
\section{Introduction}
The authors have been studying moduli spaces of algebras over the complex numbers for a long time, beginning with the construction of moduli spaces of low dimensional \linf\ algebras (see, for example \cite{fipe}). In studying \linf\ and \ainf\ algebras, a \zt-grading plays an important role, but the classical picture of algebras
without any \zt-grading fits into this picture as well.

When analyzing \linf\ algebras on a three dimensional space, we first realized that they are just the ordinary 3-dimensional Lie algebra structures on the space. Because we arrived at our classification of the algebras through an approach that focused on deformations of the algebras, we arrived at
a decomposition into strata that had some important differences with the classical decomposition of the space (see \cite{jac}).  Eventually, we discovered that moduli spaces of low dimensional ordinary and \zt-graded complex Lie and associative algebras have a decomposition into
strata consisting of some very simple types of projective orbifolds, of the form $\P^n/G$, where $G$ is a subgroup of the symmentric group $\Sigma_{n+1}$, which acts on $\P^n$ by permuting the projective coordinates.

Based on our construction of many different such moduli spaces, we conjectured that this type of decomposition happens for all such moduli spaces of finite dimensional algebras over $\C$ (see \cite{fp15,fp20}). We have verified this conjecture in many cases, but have so far not been able to establish it in general.  in this paper, we give an explicit construction of a stratification of part of the moduli space of Lie algebras of a given dimension in exactly this form, which holds in any finite dimensional space.

This part arises when considering Lie algebras which arise as extensions of a 1-dimensional (trivial) Lie algebra by a trivial $n$-dimensional Lie algebra.
These algebras are classified by the action of $\C^*\times\GL(n,\C)$ on the space $\gl(n,\C)$ of $n\times n$ matrices by conjugation and multiplication by
a scalar, which is sometimes called scalar similarity.  We will give a decomposition of the space of equivalence classes of matrices under this action into
strata that are parameterized by projective orbifolds.  Moreover, the deformations of the elements can be read directly from the forms of the matrices.
The classification is related to Jordan decomposition, but is actually coarser, because several classes of Jordan forms combine into one stratum. (In fact, there are a few other differences as well.)

Later, we were asked to compare the algebraic deformation theory with the analytic deformation theory, in particular, to relate the algebraic notion of a
miniversal deformation to the analytic one.  While these definitions are quite different, the relationship is very close, so we find that our ideas also
can be used to stratify moduli spaces arising in the analytic context from the same group action. Arnold in \cite{arn} gave a construction of a versal deformation of the moduli space of similar matrices based on their Jordan decomposition. Although the action he considered was similarity, rather than
scalar similarity, there is a direct parallel between his analysis and our point of view. Classification of similar matrices was first studied in \cite{arn}, but has been revisited and improved upon by, for example
\cite{horn-ser,dfs,eek}.

In the construction of moduli spaces, this time of associative algebras, 
we discovered that a part of the moduli space is described by the cogredient action of $\GL(n,\C)$ on
$\gl(n)$, in other words the action given by $G.A=GAG^*$. This moduli space is the space of equivalence classes of bilinear forms on $\C^n$.
 Bilinear forms over a field of characteristic not equal to 2 were classified
by  Riehm \cite{riehm} and Gabriel \cite{gabriel}. The classification in \cite{horn-ser} turns out to be very simple, because bilinear forms are either decomposable, or have an
explicit description.
In this paper,  we only give a stratification of the space of bilinear forms over $\C$ up to dimension 3, but already here we discovered that the natural stratification
requires a new way of decomposing the moduli space.

In our context, we studied only spaces up to dimension 4, and found that this moduli space also has a natural stratification by
projective orbifolds of exactly the same type.  We have not determined a classification of bilinear forms in general in terms of such a decomposition, but believe that such a classification exists. Our classification is quite different than the one in \cite{horn-ser}, and we will later give a comparison. It should
be noted that the projective structure does not require any multiplication by an element in $\C^*$, as showed up in the case of the action by conjugation, because bilinear forms are cogredient if they differ by a constant multiple. 

\section{Lie algebras arising from extensions of a trivial Lie algebra by a trivial Lie algebra}

Suppose that we are given a space $\C^{n+1}=\langle e_1\mcom e_{n+1}\rangle$. We are going to analyze how extensions of the trivial Lie algebra
structure on $W=\langle e_{n+1}\rangle$ by the trivial Lie algebra structure on $M=\langle e_1\mcom e_n\rangle$ look like. The Lie algebras are determined by
the rules
\begin{equation*}
  [e_j,e_{n+1}]=a^i_je_i.
\end{equation*}
Any matrix $A=(a^i_j)$ determines a Lie algebra. Moreover
$A$ and $A'$ determine isomorphic Lie algebras precisely when $A'=cG\inv A G$ for some block diagonal matrix  $G=\left[\begin{smallmatrix}G'&0\\0&c\end{smallmatrix}\right]$
in $\GL(n+1,\C)$.  This means that these algebras are classified by the scalar similarity classes of matrices.
We give a decomposition for some small dimensional cases, followed by a description of the general case.

\subsection{2-dimensional Lie algebras}

Every 2-dimensional Lie algebra is solvable, and so arises as an extension of a 1-dimensional Lie algebra by a 1-dimensional Lie algebra.  The matrix $A$ has the form
$A(p)=[p]$, where the coordinate $p$ is projective, in the sense that $p'=cp$ determines the same structure
when $c\ne 0$.  Of course, the case $p=0$ cannot be excluded, as it gives the trivial 2-dimensional Lie algebra. So we need to include, what algebraic geometers
call, the generic point as an element of $\P^0$.  This is true in general. In all of our constructions, we include the generic element $(0:\cdots :0)$ in $\P^n$, because it corresponds to an actual algebra. Moreover, deformations of this point can be understood easily from this point of view. The algebra corresponding to the generic point always has, what are called, \emph{jump deformations} to every other point in the same stratum. (We explain the terminology later).

\subsection{3-dimensional case}

Here we encounter the first case where we discovered a different classification than the classical one, which arises from our deformation theory point of view.
The $2\times 2$ matrices which arise, fall into two distinct strata

\begin{table}[h]
\begin{tabular}{cc}
 $ A(p_1)=\left[\begin{array}{cc}p_1&0\\0&p_1\end{array}\right]$&$B(p_1:p_2)=\left[\begin{array}{cc}p_1&1\\0&p_2\end{array}\right]$\\\\
 $\P^0$&$P^1/\Sigma_2$\\\\
 $[2,0]$&$[1,1]$
\end{tabular}
\end{table}
In the table above, the second row corresponds to the orbifold that parameterizes the stratum, and the third row gives the partition of $2$ which the stratum
corresponds to.

The second stratum has a symmetry, given by the interchange of $p_1$ and $p_2$, in other words,
$B(p_1:p_2)\sim B(p_2:p_1)$, so that this stratum is parameterized by $\P^1/\Sigma_2$. Moreover, this is the only symmetry in this stratum.
The two strata correspond to the partitions of $2$, with $A(p_1)$ corresponding to the partition $[2,0]$, and $B(p_1:p_2)$ corresponding to the
partition $[1,1]$. Moreover, the deformations of these algebras into algebras
 to other algebras of this type can be read from the corresponding partition.

First, we note that there are \emph{smooth deformations} along the family $B(p_1:p_2)$ of tshe form $B(p_1+t:p_2)$ (when $p_2\ne 0$). By a smooth deformation
we mean a deformation with a parameter $t$, such that for $s\ne t$, the element corresponding to $s$ is not equivalent to the element corresponding to $t$, at least in some small nbd of 0.  We say that the algebra deforms smoothly
in a nbd of itself along the family. When $p_2=0$, we can use a different deformation $B(p_1:t)$.  Both of these deformations are versal in terms of deforming in this class of algebras.  Finally, deformations of $B(0:0)$ along the family can be obtained by a 2-parameter family $B(t_1:t_2)$. These deformations determine \emph{jump deformations} from $B(0:0)$ to any element $B(p:q)$ along the 1-parameter subfamily $t_1=pt$, $t_2=qt$.  The term jump deformation refers
to the fact that $B(pt:qt)\sim B(p:q)$ whenever $t\ne 0$, so this means that along this curve, except at the starting point, we obtain the same equivalence
class.

Finally, we note that if $p\ne 0$, then $A_{t_1,t_2}=\left[\begin{smallmatrix}p+t_1&t_2\\0&p\end{smallmatrix}\right]$ gives rise to all of the deformations of $A(p)$. There is a type of deformation, called a \emph{versal deformation}, which has the property that it induces all deformations. In particular, there is
a special kind of versal deformation, called a \emph{miniversal deformation}, that has the fewest possible parameters in a versal deformation.
The deformation $A_{t_1,t_2}$ is not versal,  because it does not give all the deformations, even though it determines all the algebras to which $A(p)$ deforms.
From our point of view,  versal deformation is only important because it determines what algebras the original object deforms to, and we are not really interested in the versal deformation itself.

In fact, when $p\ne 0$, a miniversal deformation of $A(p)$ is given by $\left[\begin{smallmatrix}p&t_1\\t_2&p+t_3\end{smallmatrix}\right]$.
Nevertheless, the deformation $A_{t_1,t_2}$ determines all of the algebras that $A(p)$ deforms to. In fact, we have a jump deformation
$A(p)\leadsto B(p:p)$ given by the matrix $\left[\begin{smallmatrix}p&t\\0&p\end{smallmatrix}\right]$, as well as deformations in a nbd of $B(p:p)$.

 It is important to realize  that we can read off the deformations by comparing the partitions.  Consider $[2,0]$ and $[1,1]$. It is possible to obtain the first partition by moving the 1 in the second spot in the second partition to the first spot.  This means that the algebra associated to the first partition has a jump deformation to an algebra given by the second partition.  In fact, what it indicates is that if $p_2=p_1$ in $B(p_1:p_2)$, then the jump is between
 $A(p_1)$ and $B(p_1:p_2)$. This information is contained in the fact that to obtain the partition $[2,0]$ from $[1,1]$, we add the second column to the first one, corresponding to replacing the second variable with the first one.

The deformations of the algebras are captured in the picture below.

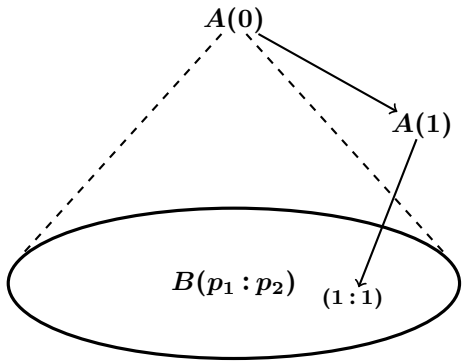
\begin{figure}[ht]
    \begin{tikzpicture}[]
    \node[algebra] at (3,4.5)    (A0)   {\bm{$A\of{0}$}};
    \node[algebra] at (5.5,3.1)  (A1)   {\bm{$A\of{1}$}};
    \node[algebra] at (3,1)      (Bpq)  {\bm{$B\of{p_1:p_2}$}};
    \node[algebra] at (4.6,0.8)  (B11)  {\footnotesize\bm{$\of{1:1}$}};
    \draw[family]         (Bpq)  ellipse (3cm and 1cm);
    \draw[jumpFamily]     (A0) -- (0.2,1.4);
    \draw[jumpFamily]     (A0) -- (5.8,1.4);
    \draw[jumpSingle]     (A0) -- (A1);
    \draw[jumpSingle]     (A1) -- (B11);
    \end{tikzpicture}

    \caption{The Moduli Space of Similar $2\times 2$ Complex Matrices}

\end{figure}

Now, there is a bit more complexity in the space of 3-dimensional algebras, because they are not all given by extensions of a trivial algebra by another one.
In fact, there is a 3-dimensional simple algebra $\sl(2,\C)$. In terms of deformations of 3-dimensional Lie algebras, the algebra corresponding to
$B(1:-1)$ has an extra jump deformation to the algebra $\sl(2,\C)$, so the actual deformation picture is a bit more complex than we are representing. What we
are discussing here is only the part of the moduli space determined by these $2\times 2$ matrices.

Now, let us consider the classical decomposition of the moduli space of  3-dimensional Lie algebras (see \cite{jac}). There are several types coming from the 2-dimensional matrices above, with only the simple Lie algebra not corresponding to this kind of decomposition.
\begin{align*}
\begin{array}{cccc}
\mathfrak r_{3,\lambda}(\C)&\mathfrak r_3(\C)&\mathfrak r_2(\C)\oplus\C&\mathfrak n_3\\\\
\left[\begin{array}{cc}1&0\\0&\lambda\end{array}\right]&
\left[\begin{array}{cc}1&1\\0&1\end{array}\right]&
\left[\begin{array}{cc}1&1\\0&0\end{array}\right]&
\left[\begin{array}{cc}0&1\\0&0\end{array}\right]
\end{array}
\end{align*}
Note that the main difference between the classical decomposition and ours is that the elements $\left[\begin{smallmatrix}1&0\\0&1\end{smallmatrix}\right]$
and $\left[\begin{smallmatrix}1&1\\0&1\end{smallmatrix}\right]$ are interchanged. The question is which one really should belong to the family? The answer
is that the second one is the correct one, because the first one has a jump deformation to the second one, while the second one behaves in the generic manner
of the other elements in the family.  This fact is revealed in the cohomology of the corresponding algebras, but is not important for our present discussion.
Note that our description is more compact, but that is not the justification for our decomposition.  The main advantage is that the description in terms of the projective orbifold structure captures the geometric picture of the moduli space.

\section{4-dimensional Lie algebras}
Here, we are studying the equivalence classes of $3\times 3$ matrices. The deformation information is represented in the following picture.

\begin{figure}[h]
    \begin{tikzpicture}[]
    \node[algebra] at (3,7)      (A0)    {\bm{$A\of{0}$}};
    \node[algebra] at (11,5)     (A1)    {\bm{$A\of{1}$}};
    \node[algebra] at (9,3)      (Bpq)   {\bm{$B\of{p_1:p_2}$}};
    \node[algebra] at (10.2,2.8) (B11)   {\footnotesize\bm{$\of{1:1}$}};
    \node[algebra] at (3,1)      (Cpqr)  {\bm{$C\of{p_1:p_2:p_3}$}};
    \node[algebra] at (4.6,0.6)  (Cppq)  {\footnotesize\bm{$\of{p_1:p_1:p_2}$}};
    \draw[family]         (Cpqr)  ellipse (3cm and 1cm);
    \draw[family]         (Bpq)   ellipse (2cm and 0.66cm);
    \draw[jumpFamily]     (A0) -- (0.2,1.4);
    \draw[jumpFamily]     (A0) -- (5.8,1.4);
    \draw[jumpFamily]     (A0) -- (7,3);
    \draw[jumpFamily]     (A0) -- (10.6,3.4);
    \draw[jumpFamily]     (7,3) -- (Cppq);
    \draw[jumpFamily]     (10.4,2.5) -- (Cppq);
    \draw[jumpSingle]     (A0) -- (A1);
    \draw[jumpSingle]     (A1) -- (B11);
    \end{tikzpicture}

    \caption{The Moduli Space of Similar $3\times3$ Complex Matrices}

\end{figure}
\begin{table}[h]
\begin{tabular}{ccc}
  $A(p)=\left[\begin{array}{ccc}p&0&0\\0&p&0\\0&0&p\end{array}\right]$&$B(p_1:p_2)=\left[\begin{array}{ccc}p_1&0&0\\0&p_1&1\\0&0&p_2\end{array}\right]$&
  $C(p_1:p_2:p_3)=\left[\begin{array}{ccc}p_1&1&0\\0&p_2&1\\0&0&p_3\end{array}\right]$\\\\
  $\P^0$&$\P^1$&$\P^2/\Sigma_3$\\
  $[3,0,0]$&$[2,1,0]$&$[1,1,1]$\\
\end{tabular}
\end{table}

Note that we can read off the deformations directly from the partition. For example $[3,0,0]$ can be obtained from $[2,1,0]$ by adding the second column to the first, so there is a corresponding deformation.  The deformation picture does not require computing a versal deformation.

\section{5-dimensional Lie algebras}

The table below gives the stratification of the space of $4\times 4$ matrices by projective orbifolds.

\begin{table}[h]
  \begin{tabular}
    {ccccc}
    $A(p_1)$&$B(p_1:p_2)$&$C(p_1:p_2)$&$D(p_1:p_2:p_3)$&$E(p_1:p_2:p_3:p_4)$\\\\
    $\left[\begin{smallmatrix}p_1&0&0&0\\0&p_1&0&0\\0&0&p_1&0\\0&0&0&p_1\end{smallmatrix}\right]$&
    $\left[\begin{smallmatrix}p_1&0&0&0\\0&p_1&0&0\\0&0&p_1&1\\0&0&0&p_2\end{smallmatrix}\right]$&
    $\left[\begin{smallmatrix}p_1&1&0&0\\0&p_2&0&0\\0&0&p_1&1\\0&0&0&p_2\end{smallmatrix}\right]$&
    $\left[\begin{smallmatrix}p_1&0&0&0\\0&p_1&1&0\\0&0&p_2&1\\0&0&0&p_3\end{smallmatrix}\right]$&
     $\left[\begin{smallmatrix}p_1&1&0&0\\0&p_2&1&0\\0&0&p_3&1\\0&0&0&p_4\end{smallmatrix}\right]$\\\\
     $\P^0$&$\P^1$&$\P^1/\Sigma_2$&$\P^2/\Sigma_2$&$\P^3/\Sigma_4$\\\\
     $[4,0,0,0]$&$[3,1,0,0]$&$[2,2,0,0]$&$[2,1,1,0]$&$[1,1,1,1]$
  \end{tabular}
\end{table}

The picture corresponding to this stratification is given below.

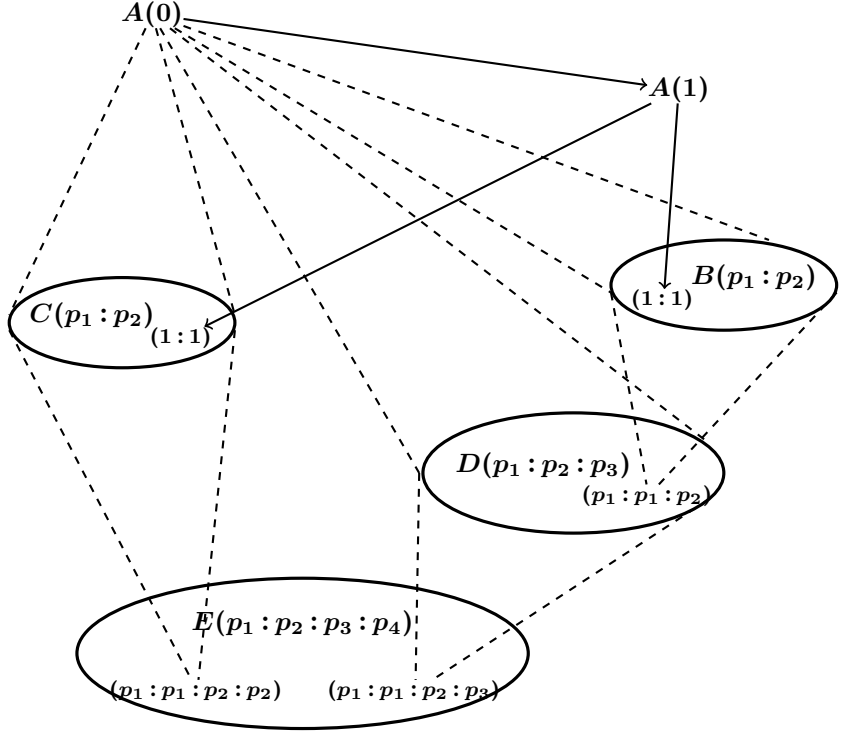
\begin{figure}[h]
    \begin{tikzpicture}[]
    \node[algebra] at (2,9.5)      (A0)    {\bm{$A\of{0}$}};
    \node[algebra] at (9,8.5)     (A1)    {\bm{$A\of{1}$}};
    \node[algebra] at (10,6)      (Bpq)   {\bm{$B\of{p_1:p_2}$}};
    \node[algebra] at (8.8,5.7) (B11)   {\footnotesize\bm{$\of{1:1}$}};
    \node[algebra] at (1.2,5.5)   (Cpq)  {\bm{$C\of{p_1:p_2}$}};
    \node[algebra] at (2.4,5.2) (C11)   {\footnotesize\bm{$\of{1:1}$}};
    \node[algebra] at (7.2,3.5)      (Dpqr)  {\bm{$D\of{p_1:p_2:p_3}$}};
    \node[algebra] at (8.6,3.1) (Dppq)   {\footnotesize\bm{$\of{p_1:p_1:p_2}$}};
    \node[algebra] at (4,1.4)      (Epqrs)  {\bm{$E\of{p_1:p_2:p_3:p_4}$}};
    \node[algebra] at (5.5,0.5)  (Eppqr)  {\footnotesize\bm{$\of{p_1:p_1:p_2:p_3}$}};
    \node[algebra] at (2.6,0.5)  (Eppqq)  {\footnotesize\bm{$\of{p_1:p_1:p_2:p_2}$}};
    \draw[family]         (Bpq)+(-0.4,-0.1)   ellipse (1.5cm and 0.6cm);
    \draw[family]         (Cpq)+(0.4,-0.1)   ellipse (1.5cm and 0.6cm);
    \draw[family]         (Dpqr)+(0.4,-0.1)  ellipse (2cm and .8cm);
    \draw[family]         (Epqrs)+(0,-0.4)  ellipse (3cm and 1cm);
    \draw[jumpFamily]     (A0) -- (0.1,5.5);
    \draw[jumpFamily]     (A0) -- (3.1,5.5);
    \draw[jumpFamily]     (A0) -- (5.55,3.4);
    \draw[jumpFamily]     (A0) -- (9.4,3.8);
    \draw[jumpFamily]     (A0) -- (8.1,5.8);
    \draw[jumpFamily]     (A0) -- (10.2,6.5);
    \draw[jumpFamily]     (5.55,3.4) -- (Eppqr);
    \draw[jumpFamily]     (9.4,3.05) -- (Eppqr);
    \draw[jumpFamily]     (0.1,5.3) -- (Eppqq);
    \draw[jumpFamily]     (3.1,5.3) -- (Eppqq);
    \draw[jumpFamily]     (8.1,5.8) -- (Dppq);
    \draw[jumpFamily]     (11.1,5.8) -- (Dppq);
    \draw[jumpSingle]     (A1) -- (C11);
    \draw[jumpSingle]     (A1) -- (B11);
    \draw[jumpSingle]     (A0) -- (A1);
    \end{tikzpicture}

    \caption{The Moduli Space of Similar $4\times 4$ Complex Matrices}
\end{figure}
Once again, the deformations of the strata can be determined by the partition.  We also can read off the orbifold structure of the stratum from the matrix.
For example, in the fourth stratum, one can interchange $p_2$ and $p_3$, but not the other parameters, because the repetition of $p_1$ makes this parameter play
a different kind of role.

We also want to emphasize that the deformations given by these matrices don't give the complete deformation picture of the corresponding Lie algebras, because
there are other strata which don't arise by extensions of a 1-dimensional Lie algebra by a 4-dimensional Lie algebra, so the deformation picture here is much
simpler to describe than the general picture.  For example, in this context, every element except the generic element has the same number of parameters in a miniversal deformation, but in reality, there are special subfamilies with different deformations in the Lie algebra picture. That is because there are
other elements in the moduli space which are not given by those $4\times 4$ matrices, so there are more directions in which an element may deform.

\section{General Case}

From the examples above, it is fairly straightforward to write the general picture of the moduli space of $n\times n$ matrices under the action of
$\GL(n,\C)\times\C^*$.  Consider a multi-index of the form $[m_1\mcom m_n]$, where $m_1\mplus m_n=n$, $m_i\ge m_{i+1}\ge 0$. Then each such multi-index
determines a stratum in the following manner. First, we consider the case $m_1>m_2$. Then the matrix should have $m_1-m_2$ columns with only a $p_1$ on the main diagonal.
For the $(m_1-m_2+1)$-th row, there will be a 1 to the right of the entry $p_1$. Next, suppose that $m_2=m_3=\cdots m_k>m_{k+1}$. Then there will be $m_k-m_{k+1}$ repetitions of the pattern where the columns have a 1 above the entry on the main diagonal except in the first column, followed by the entries on the main diagonal given by $p_1,\mcom p_k$ sequentially, repeating this pattern $m_k-m_{k+1}$ times. If we think that this has reduced all the entries in the multi-index to $m_{k+1}$, then
we repeat the process again,  until finally we have run out of nonzero entries in the multi-index.  If the first $N$ entries in the multi-index are nonzero, then
there will be variables $p_1\mcom p_N$, and the stratum will be parameterized by $\P^{N-1}$ with the action of a subgroup of $\Sigma_{N}$.  Deformations of 
the elements in the stratum can be read off easily.

We give an example of a stratum in the space of $10\times 10$ matrices given by the multi-index $[3,2,2,2,1,0,0,0,0]$. The resulting matrix will have 5 parameters
$p_1\mcom p_5$, and will be given by equivalence classes of matrices of the form
$$
\left[
\begin{array}{rrrrrrrrrr}
p_1&0&0&0&0&0&0&0&0&0\\
0&p_1&1&0&0&0&0&0&0&0\\
0&0&p_2&1&0&0&0&0&0&0\\
0&0&0&p_3&1&0&0&0&0&0\\
0&0&0&0&p_4&0&0&0&0&0\\
0&0&0&0&0&p_1&1&0&0&0\\
0&0&0&0&0&0&p_2&1&0&0\\
0&0&0&0&0&0&0&p_3&1&0\\
0&0&0&0&0&0&0&0&p_4&1\\
0&0&0&0&0&0&0&0&0&p_5\\
\end{array}
\right]
$$
The stratum is parameterized by $\P^4/\Sigma_3$, where $\Sigma_3$ acts by permuting the coordinates $p_2$, $p_3$ and $p_4$.

Every nonincreasing multi-index $m=[m_1\mcom m_n]$ of nonnegative integers determines a stratum in the moduli space of $n\times n$ complex matrices.  When do elements in
the stratum $m$ deform to elements in the stratum $k=[k_1\mcom k_n]$?  It turns out that if we can get from $k$ to $m$ by adding some columns in $k$ to each other and rearranging, then there is a deformation from $m$ to $k$.

Finally, we note that the number of strata in the moduli space of $n\times n$ matrices is exactly equal to the number of partitions of $n$.

\section{Algebraic definition of a formal deformation}

If $d$ is a Lie algebra structure on a vector space $V$ over a field $\k$, then a 1-parameter deformation of $d$ is an algebra of the form
\begin{align*}
  d_t=d+t\psi_1+t^2\psi_2\mplus
\end{align*}
where $\psi_i:V\tns V\ra V$ are antisymmetric functions.  Of course, we require that $d_t$ is a Lie algebra, which is a condition related to the
Schouten bracket of the map, but we don't need to go into the details of this construction here (see \cite{NR}).

One can define multiparameter deformations as well, but there is a technical definition of a formal deformation as follows.  A \emph{formal base} $A$ over
$\k$ is a complete, local algebra over $\k$. In fact, a formal base is nothing more than a quotient of a formal power series algebra $\k[[t_1\mcom]]$ by
an ideal, so that it can be given in terms of parameters $t_k$. A deformation $d_A$ of $d$ with base $A$ is an $A$-Lie algebra structure on $V\tens A$, which
projects to $d$ under the canonical morphism $A\ra\k$.  A morphism of algebras $f:A\ra B$ induces a deformation $f_*(d_A)$ with base $B$, see \cite{fi}.

A versal deformation $d_A$ with base $A$ is one such that if $d_B$ is any formal deformation with formal base $B$, then there is a morphism $f:A\ra B$ such that
$f_*(d_A)$ is equivalent to $d_B$.  The reason that it is called versal, rather than universal is that the morphism $f$ is not unique, in general.  A
miniversal deformation of $d$ is one for which the morphism $f$ is unique when the formal base $B$ is \emph{infinitesimal}, meaning that the square of the maximal ideal in $B$ vanishes.  What this means in practice is that the number of parameters in the base $A$ is minimal.

This technical definition is abstract, but in practice, the computation of a miniversal deformation is very concrete, see \cite{fifu}.

\section{Analytic definition of Miniversal Deformations}

We give a more formal definition of a deformation of a group action, due to Arnold \cite{arn}. We restrict ourselves to the case when we are working with complex vector spaces.  It is also enough to consider matrix groups, because a group action on a finite dimensional
vector space reduces to the action of a matrix group.
\begin{dfn}
Let $G$ be a matrix group acting on a complex vector space $V$. Then a deformation of an element $\bar v\in V$ is a holomorphic map $v(t):\C^k\ra V$, defined in some
nbd of zero, such that
$\overline{v_0}=\bar v$. Two deformations $v(t):\C^k\ra V$ and $\hat v(t):\C^k\ra V$ are called equivalent if there is a deformation $G(t):\C^k\ra G$ of the identity matrix $I\in G$, such that
$G(t)v(t)=\hat v(t)$.
\end{dfn}
Notice that in this definition, it is not sufficient that for each $t$ there is a matrix $G(t)$ such that $G(t)v(t)=\hat v(t)$. The dependence of $G(t)$ on $t$ needs
to be holomorphic, and moreover, $G(0)$ must be the identity matrix. This corresponds to the notion of \emph{formal equivalence} in the deformation theory of algebras.
Also, we should mention that by a deformation of the identity matrix we mean a holomorphic map $G(t):C^k\ra G$, defined in a nbd of zero, such that
$G(0)=I$.

\begin{dfn}
A deformation $v(t):\C^k\ra V$ of $\bar v$ is called \emph{versal}, if given any other deformation $u(s):\C^\ell\ra V$ of $v$, there is a holomorphic function
$\ph:\C^\ell\ra C^k$, defined in a nbd of $0$, such that $\hat v(s)=v(\ph(s))$.
\end{dfn}
For a deformation $V(t):\C^k\ra V$, the dimension $k$ is called the number of parameters of the deformation.
\begin{dfn}
A deformation $v(t)$ of $\bar v$ is said to be a \emph{miniversal} deformation of $v$ if it is versal and the number of parameters is the minimum among all
versal deformations of $\bar v$.
\end{dfn}

At first, this definition seems far away from the definition of a deformation of an algebra, where we consider deformations given over a \emph{formal base}
which is a complete, local algebra $A$.  The local property is that there is a unique maximal ideal $\mathfrak m$ in $A$, while the completeness means that
expressions of the form $\sum_{i=0}^\infty a_i$ are well defined, as long as $a_i$ lies in the $i$th power of the ideal.  However, there is a base for
a deformation in the above definition, just a little bit more hidden.  The base is the germs of analytic functions on $\C^k$, and the maximal ideal is
given by the functions which vanish at the origin. The completeness follows from the fact that the functions are represented by power series converging
in a nbd of the origin.  In this sense, the algebraic definition is more general, but for the application in mind, the analytic definition is more natural.

If one considers $Gv$, the orbit of $v$ under the group action, it is not generally a manifold, so the notion of its tangent space is somewhat problematic.  However,
suppose we consider a 1-parameter deformation $g(t)$ of the identity matrix, we can compute $(d(g(t)v)/dt)_{t=0}$, which is a tangent vector to $v$.  The subspace $T$ of $V$ spanned
by the tangent vectors to $v$ is called the tangent space of $v$.  The following theorems make it possible to compute miniversal deformations (see, for example
\cite{arn}).
\begin{thm}
Let $V=\langle v_1\mcom v_n\rangle$ be a finite dimensional vector space with a group action.  Define $v(t)=v+\sum_{i=1}^n t_iv_i$. Then
$v(t)$ is a versal deformation of $\bar v$.
\end{thm}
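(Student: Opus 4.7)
The plan is to unpack the definition of versality and reduce the claim to a direct coordinate computation. Given any competing deformation $u(s):\C^\ell\to V$ with $u(0)=\bar v$, I need to produce a holomorphic function $\ph:\C^\ell\to\C^n$ vanishing at $0$ such that $u(s)=v(\ph(s))=\bar v+\sum_{i=1}^n\ph_i(s)v_i$. Since the base point of the deformation $v(t)$ is $v(0)=\bar v$, this will also verify that $v(t)$ itself qualifies as a deformation of $\bar v$.

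First, I would expand $\bar v$ in the spanning set, writing $\bar v=\sum_i b_iv_i$ for scalars $b_i$. Second, since $u(s)$ takes values in $V=\langle v_1,\dots,v_n\rangle$, I would express $u(s)=\sum_i c_i(s)v_i$ with holomorphic coefficient functions $c_i:\C^\ell\to\C$; if the $v_i$ are linearly independent these are unique, and otherwise one selects a basis from among the $v_i$, extracts coordinates there, and sets the remaining coefficients to zero. The normalization $u(0)=\bar v$ forces $c_i(0)=b_i$. Third, I would set $\ph_i(s)=c_i(s)-b_i$, so that the resulting map $\ph=(\ph_1,\dots,\ph_n):\C^\ell\to\C^n$ is holomorphic with $\ph(0)=0$, and verify $v(\ph(s))=\bar v+\sum_i(c_i(s)-b_i)v_i=\sum_i c_i(s)v_i=u(s)$, establishing versality.

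The hard part is essentially non-existent: the statement boils down to expanding a holomorphically varying vector in a fixed spanning set, and the only subtlety is checking that the coordinate functions inherit holomorphicity from $u(s)$, which is automatic once a basis has been fixed. The content of the theorem lies not in its proof but in its use: it exhibits an explicit versal deformation with $n=\dim V$ parameters, from which a miniversal deformation is obtained by restricting the parameters $t_i$ to a subspace transversal to the tangent space $T$ of the orbit $G\cdot\bar v$ at $\bar v$, precisely as outlined in the discussion preceding the statement.
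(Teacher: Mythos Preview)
The paper does not supply its own proof of this theorem; it is stated with a reference to Arnold \cite{arn} and used as a tool for computing miniversal deformations. So there is nothing in the paper to compare your argument against line by line.

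Your argument is correct and is essentially the standard one. The map $t\mapsto \bar v+\sum_i t_iv_i$ is an affine isomorphism from $\C^n$ onto $V$ (taking $v_1,\dots,v_n$ to be a basis, as the notation $V=\langle v_1,\dots,v_n\rangle$ with $\dim V=n$ intends), so any holomorphic curve $u(s)$ through $\bar v$ is the image of a unique holomorphic curve $\ph(s)$ through $0$ in parameter space. You have in fact proved something slightly stronger than the paper's definition of versality requires: you obtain $u(s)=v(\ph(s))$ on the nose, whereas the definition (read in light of the earlier definition of equivalence via a deformation $G(t)$ of the identity) only asks that $u(s)$ be \emph{equivalent} to $v(\ph(s))$. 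Your closing remark is also on point: the group action plays no role in this theorem, and only enters in the companion result, where one restricts to a transversal of the tangent space $T$ of the orbit to extract a miniversal deformation.
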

We also have a characterization of a miniversal deformation of $\bar v$.
\begin{thm}
Let $T$ be the tangent space to $v$ and suppose that $W=\langle w_1\mcom w_k\rangle$ is a complementary subspace of $V$ to $T$. Then
$v(t)=v+\sum_{i=1}^k t_iw_i$ is a miniversal deformation of $\bar v$.
\end{thm}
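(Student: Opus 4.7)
The plan is to establish versality of $v(t)=v+\sum_{i=1}^k t_iw_i$ by a holomorphic implicit function theorem argument, then deduce minimality by a dimension count that uses the versality property applied to the full spanning deformation from the previous theorem.

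For versality, I would consider the holomorphic map $\Phi:G\times\C^k\to V$ given by $\Phi(g,t)=g\cdot v(t)$, where $G$ is parameterized near $I$ by, say, the exponential chart of its Lie algebra. Its differential at $(I,0)$ sends a pair $(X,\dot t)$ to $Xv+\sum_i\dot t_i w_i$. By the definition of $T$ given in the excerpt, the image of $X\mapsto Xv$ is exactly $T$, so the image of the total differential is $T+W$, and this equals $V$ because $W$ is complementary to $T$. Hence $\Phi$ is a holomorphic submersion at $(I,0)$. Given any deformation $u(s):\C^\ell\to V$ of $\bar v$, the holomorphic implicit function theorem then produces holomorphic $g(s):\C^\ell\to G$ with $g(0)=I$ and $\varphi(s):\C^\ell\to\C^k$ with $\varphi(0)=0$ such that $g(s)\cdot v(\varphi(s))=u(s)$. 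This exhibits $u(s)$ as equivalent to the pullback $v(\varphi(s))$, which is the required versality.

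For minimality, suppose $\tilde v(t)=v+\sum_{i=1}^\ell t_iu_i$ is any versal deformation of $\bar v$ with $\ell$ parameters. Apply its versality to the deformation $v(s)=v+\sum_{j=1}^n s_jv_j$ obtained from the previous theorem, where $\{v_1,\dots,v_n\}$ is a basis of $V$. There are then a holomorphic $\varphi:\C^n\to\C^\ell$ with $\varphi(0)=0$ and a deformation $g(s)$ of $I$ with $g(s)\cdot\tilde v(\varphi(s))=v(s)$. Differentiating in $s_j$ at $s=0$ gives
\begin{equation*}
v_j=X_j\cdot v+\sum_{i=1}^\ell\frac{\partial\varphi_i}{\partial s_j}(0)\,u_i,
\end{equation*}
where $X_j\cdot v\in T$. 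Thus every $v_j$, and hence all of $V$, lies in $T+\langle u_1,\dots,u_\ell\rangle$, which forces $\ell\ge\dim V-\dim T=k$. Since $v(t)=v+\sum t_iw_i$ is itself versal with exactly $k$ parameters, it is miniversal.

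The main obstacle is making the implicit function step entirely rigorous, since $G$ is a matrix group rather than an open subset of affine space, and the definition of $T$ in the excerpt is given via 1-parameter families rather than via a manifold structure on the orbit. One has to fix a holomorphic chart on $G$ near $I$ (the exponential map of the Lie algebra of $G$ is the natural choice), verify that under this chart the partial differential in the $g$-direction at $(I,0)$ coincides with $X\mapsto Xv$, and then confirm that the span of this map is precisely $T$ as defined in the excerpt. Once these identifications are in place, the surjectivity of the total differential and the subsequent implicit function argument are routine.
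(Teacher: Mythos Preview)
The paper does not supply its own proof of this theorem; it simply states the result and refers the reader to Arnold \cite{arn}. Your argument is essentially the standard one found there: surjectivity of the differential of $(g,t)\mapsto g\cdot v(t)$ at $(I,0)$ followed by the holomorphic implicit function theorem for versality, and a first-order comparison for minimality. So there is nothing to contrast on the level of strategy.

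One small gap worth tightening: in your minimality step you write ``suppose $\tilde v(t)=v+\sum_{i=1}^\ell t_iu_i$ is any versal deformation'', but the definition of miniversal requires the parameter count to be minimal among \emph{all} versal deformations, not just the affine-linear ones. The fix is immediate: for a general holomorphic $\tilde v$ with $\tilde v(0)=v$, differentiate $g(s)\cdot\tilde v(\varphi(s))=v(s)$ at $s=0$ to obtain
\[
v_j \;=\; X_j\cdot v \;+\; D\tilde v(0)\bigl(\partial_j\varphi(0)\bigr),
\]
so $V=T+\operatorname{Im}D\tilde v(0)$ and hence $\ell\ge\dim\operatorname{Im}D\tilde v(0)\ge\dim V-\dim T=k$. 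With that adjustment your proof is complete.
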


\section{Arnold's Decomposition of the moduli space of Matrices under similarity}

One of the nice results in \cite{arn} is the determination of the number of parameters of a miniversal deformation of a matrix $A_0$ in terms of the
Jordan decomposition of the matrix.
\begin{thm}[Arnold] If $A_0$ is a matrix, then the number $n$ of parameters of a miniversal deformation of $A_0$ is given by
\begin{equation*}
n=\sum_\lambda n_1+3n_2+5n_3+\cdots,
\label{arneq}
\end{equation*}
where the sum is taken over all eigenvalues $\lambda$ of the matrix, and $n_1\ge n_2\ge\cdots$ are the sizes of the Jordan blocks corresponding to $\lambda$.
\end{thm}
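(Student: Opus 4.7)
My plan is to apply the preceding miniversal deformation theorem to reduce the claim to a computation of the centralizer $C(A_0) = \{X \in \gl(n,\C) : XA_0 = A_0 X\}$, and then to evaluate this centralizer from the Jordan structure of $A_0$.

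By the theorem characterizing a miniversal deformation in terms of a subspace complementary to the tangent space, the number of parameters equals $\dim \gl(n,\C) - \dim T$, where $T$ is the tangent space at $A_0$ to the orbit $\GL(n,\C)\cdot A_0$ under similarity. Differentiating $g(t)\,A_0\,g(t)\inv$ at $t=0$, with $g(0) = I$ and $g'(0) = X$, identifies $T$ with the image of the map $\operatorname{ad}_{A_0}\colon \gl(n,\C)\to\gl(n,\C)$, $X\mapsto XA_0 - A_0 X$. By rank-nullity, $\operatorname{codim} T = \dim\ker(\operatorname{ad}_{A_0}) = \dim C(A_0)$. So it suffices to prove
\begin{equation*}
\dim C(A_0)\;=\;\sum_{\la}\bigl(n_1 + 3n_2 + 5n_3 + \cdots\bigr).
\end{equation*}

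Next I reduce to the case of a single nilpotent Jordan matrix. Any element of $C(A_0)$ preserves the generalized eigenspace decomposition $\C^n = \bigoplus_\la V_\la$, so $C(A_0) = \bigoplus_\la C(A_0|_{V_\la})$. On each summand, put $A_0|_{V_\la} = \la I + N_\la$ with $N_\la$ nilpotent and in Jordan form with block sizes $n_1 \ge \cdots \ge n_r$; since translation by a scalar matrix does not change the centralizer, $C(A_0|_{V_\la}) = C(N_\la)$. Thus the problem reduces to showing $\dim C(N) = n_1 + 3n_2 + 5n_3 + \cdots$ for each nilpotent Jordan $N$ of type $(n_1\mcom n_r)$.

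For this core calculation, decompose $N = \operatorname{diag}(N_1\mcom N_r)$ and $X = (X_{ij})$ conformally, where $X_{ij}$ is $n_i\times n_j$. The equation $NX = XN$ decouples into independent rectangular Sylvester systems $N_i X_{ij} = X_{ij} N_j$. A direct calculation shows that each solution $X_{ij}$ is a rectangular upper-triangular Toeplitz matrix whose entries are determined by exactly $\min(n_i,n_j)$ free parameters, placed along the last $\min(n_i,n_j)$ superdiagonals. Summing and using $\min(n_i,n_j) = n_{\max(i,j)}$ (because the sequence is nonincreasing), together with the fact that there are $2k-1$ pairs $(i,j)$ with $\max(i,j) = k$, yields $\dim C(N) = \sum_k (2k-1)n_k$, as required. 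The hard step is precisely this rectangular Toeplitz lemma: one must carefully track how the left shift $N_i$ and the right shift $N_j$ act on $X_{ij}$, showing that the entries are forced to be constant along the appropriate superdiagonals, to vanish on the diagonals above them, and to remain free on exactly $\min(n_i,n_j)$ of them. Everything else in the argument is bookkeeping.
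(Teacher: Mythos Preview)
Your proposal is correct and follows essentially the same approach as the paper's sketch: both identify the tangent space to the orbit at $A_0$ with the image of $\operatorname{ad}_{A_0}$, use rank--nullity to equate the number of parameters with $\dim C(A_0)$, and then compute the centralizer from the Jordan form. The paper simply attributes the explicit centralizer computation to Arnold without spelling it out, whereas you supply the details (reduction to the nilpotent case via generalized eigenspaces and the rectangular Toeplitz description of the intertwiners), but the strategy is the same.
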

 The basic idea of the proof is as follows. If we consider a one-parameter subgroup
$\exp(tB)$ of $\GL(n)$, then a tangent vector to the action of this subgroup is $\frac d{dt}\,\exp(tB)^*(A)=[A,B]$. Consider the map $f$ defined on the $n\times n$ matrices given by $f(B)=[A,B]$. Then the dimension of the tangent space to the group action at $A$ is $\dim(\im(f))$, and so its codimension, $\dim(\ker(f))$, is just the dimension of the centralizer of $A$.  This means the dimension of the miniversal deformation is given by the dimension of the centralizer of $A$. Arnold goes on to compute this dimension by looking at a Jordan normal form for $A$, and explicitly constructing its centralizer, from which he concludes the dimension formula above.

One should note that the centralizer subspace is not, in general, transverse to the tangent space of the group action, so that one cannot use the centralizer to compute a miniversal deformation directly. Nevertheless, Arnold gives an explicit form for a miniversal deformation based on the Jordan normal form of the matrix.

The decomposition given by Arnold is in terms of the Jordan decomposition of a matrix. It has been pointed out by others \cite{dfs} that this decomposition is
problematic because it doesn't fit well with deformation theory.  When we first encountered Arnold's theory, we noticed that for every matrix in a stratum
in our decomposition, the number of parameters in the miniversal deformation was the same, regardless of its Jordan decomposition,
and as given by Arnold, with the exception that his count of the parameters was one higher than ours
with the exception of the generic element, when the number of parameters coincided. This makes sense, as there is one less direction to deform except for the generic
matrix.

For example, consider the stratum $B(p:q:r)$ in the 4-dimensional algebras.  First, consider generic values for $p$, $q$ and $r$ (in other words,
they don't coincide). Then for $p$ we obtain $n_1=1$ and $n_2=1$, so $n_1+3n_2=4$, and the other variables each contribute 1, so the total number of parameters
is 6.  On the other hand, suppose $p=q\ne r$. Then $n_1=2$ and $n_2=1$, so we obtain 5 parameters, and $r$ contributes 1, so again we obtain 6. The reader
can easily check that the remaining cases also result in 6 parameters.

This completes the description of the moduli space of complex $4\times 4$ matrices under the action of the group $\GL(4,\C)\times\C^*$ by conjugation.

\subsection{The Moduli space of $n\times n$ matrices under the action of $\GL(n)$ by conjugation}
If we consider $n\times n$ matrices under the action of $\GL(n,\C)$ by conjugation, then the picture is similar to the case we have
studied above. The strata are no longer parameterized by projective coordinates, but by $\C^k$ modulo a subgroup of $\Sigma_k$. We still have
the decomposition by multi-indices as discussed above, and the deformation picture is similar.

What happens when we consider other fields than $\C$?  Of course, for an algebraically closed field, the theory of Jordan decomposition is the same,
so we obtain a similar stratification.  For non algebraically closed fields, we need to use a more complicated process.  In the study of Lie algebras
over the real numbers, we worked out a stratification for some low dimensional moduli spaces of Lie algebras, and discovered that for the action
of $\GL(n,\R)\times \R^*$ on $n\times n$ real matrices, there was a stratification by orbifolds based on $S^k$ modulo a group, spheres rather than
projective spaces.  We do not go into details here.

\section{Moduli spaces of complex bilinear forms on an $n$-dimensional space}
In the study of moduli spaces of algebras we ran into some strata which were given by the action of $\GL(n,\C)$ on $n\times n$ matrices, given by
$G\star A=G^TAG$. This is of course, the equivalence classes of bilinear forms on $\C^n$, or the equivalence classes of matrices under the cogredient action. 
This type of action arises when considering a certain type of 1-dimensional extension of an associative algebra. If $M=\langle e_1\rangle$ and $W=\langle e_2\mcom e_{n+1}\rangle$, then the extension is determined by a set of brackets of the form
\begin{align*}
  e_ie_j=a_{ij}e_1.
\end{align*}
When the algebra structure on $W$ is nontrivial, there are some conditions on the matrix $A=(a_{ij})$. In addition, the elements $G$ which act on $A$ are restricted by the requirement that $G$ must preserve the multiplication structure on $W$, so we obtain a subgroup of $\GL(n,\C)$ acting on a subspace of bilinear forms.  However, when the algebra structure on $W$ is trivial, we obtain precisely the cogredient action above, so the algebras are determined by the equivalence classes of matrices under the cogredient action.

A lot of the literature assumes that, because it is easy
to classify symmetric and antisymmetric forms, and every matrix is uniquely a sum of a symmetric and antisymmetric matrix, this means that the
classification of bilinear forms is simple.  Of course, the problem with this approach is that if a matrix $C$ decomposes as the sum of a symmetric matrix $S$ and
an antisymmetric matrix $A$, so $C=S+A$, then $G\star C=G^TSG+G^TAG$, so that while it is easy to put one of the matrices $S$ or $A$ into a canonical form, there is
no reason to assume that one can also put the other matrix in a canonical form.

Luckily, some other authors have correctly addressed these issues, and a decomposition of matrices into certain strata under the cogredient action was obtained in \cite{horn-ser}.
The idea was refined in \cite{dfs}. The decomposition given there is as follows.  Let $J_n(\lambda)=\left[\begin{array}{cccc}\lambda&1&&0\\&\lambda&\ddots&\\&&\ddots&1\\0&&&\lambda\end{array}\right]$ be the Jordan block of size $n$ with eigenvalue $\lambda$, and $\Gamma_n$ be the matrix  $\Gamma_n=\left[\begin{array}{ccccc}0&&&&\udots\\&&&-1&\udots\\&&1&1&\\&-1&-1&&\\1&1&&&0\end{array}\right]$.
Then, we have the following theorem, which was originally obtained in \cite{hor-ser2}.
\begin{thm}[Horn, Sergeichuk]\label{horn-ser}
Any square matrix is congruent to a direct sum of matrices of the form
$$
H_m(\lambda)=\left[\begin{array}{cc}0&I_m\\J_m(\lambda)&0\end{array}\right],\qquad \Gamma_m,\qquad J_m(0),
$$
where $\lambda\ne 0$, $\lambda\ne\s{m+1}$, $I_m$ is the $m\times m$ identity matrix, $H_m(\lambda)$ is equivalent to $H_m(\lambda^{-1})$, and this decomposition is unique
up to reordering of the summands.
\end{thm}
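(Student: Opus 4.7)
The plan is to use an auxiliary similarity invariant, the \emph{cosquare} $C_A = A^{-T} A$ (defined for nonsingular $A$), to reduce the congruence classification to a Jordan-form problem, and then realize each admissible Jordan type by an explicit canonical form. Under congruence $A \mapsto G^T A G$ the cosquare transforms by similarity, $C_A \mapsto G\inv C_A G$, so its Jordan type is a congruence invariant. A short calculation further gives $A C_A A\inv = A A^{-T} = (C_A^T)\inv$, and since every matrix is similar to its transpose, $C_A \sim C_A\inv$. The Jordan structure of $C_A$ at $\la$ therefore matches that at $\la\inv$, so the nonzero eigenvalues of $C_A$ pair up under inversion, with two self-paired values $\la = \pm 1$.

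I would first peel off the singular part. Using the filtration of $\C^n$ by the subspaces $V_k = \ker A^k \cap \ker (A^T)^k$, together with complements carefully adapted to both $A$ and $A^T$, one extracts at each stage a direct summand congruent to $J_m(0)$ and iterates on the complement. A standard dimension count shows that after finitely many steps the residual form is nondegenerate, reducing the problem to the case of nonsingular $A$.

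For nonsingular $A$, the task is to realize each Jordan type of $C_A$ permitted by $C_A \sim C_A\inv$ as an explicit direct sum of $H_m(\la)$ and $\Gamma_m$ blocks. A direct computation of the cosquare of $H_m(\la) = \left[\begin{smallmatrix}0 & I_m \\ J_m(\la) & 0\end{smallmatrix}\right]$ yields $J_m(\la) \oplus J_m(\la)^{-T}$, which is similar to $J_m(\la) \oplus J_m(\la\inv)$; the $\la \leftrightarrow \la\inv$ symmetry of this cosquare type is implemented, after constructing the appropriate block congruence, by the equivalence $H_m(\la) \sim H_m(\la\inv)$. A parallel computation shows that the cosquare of $\Gamma_m$ is similar to a single Jordan block $J_m(\s{m+1})$; this explains the convention that $\Gamma_m$ is the indecomposable form at the self-paired eigenvalue $\la = \s{m+1}$, and why $H_m(\la)$ is excluded there (being then congruent to $\Gamma_m \oplus \Gamma_m$). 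The decomposition of $A$ can then be read off from the Jordan type of $C_A$: each pair of equal-size blocks at $\{\la, \la\inv\}$ with $\la^2 \ne 1$ assembles into an $H_m(\la)$; at $\la = \pm 1$ the blocks of parity opposite to that of a $\Gamma_m$ contribution come in pairs and assemble into $H_m(\la)$'s, while the remaining single blocks contribute $\Gamma_m$'s. The main technical obstacle is existence — producing an explicit congruence that block-diagonalizes $A$ into these pieces — which requires an inductive reduction using generalized eigenvectors of $C_A$, chosen so that each $C_A$-cyclic subspace supports exactly one $H_m$ or $\Gamma_m$ summand of $A$. Once existence is established, uniqueness follows immediately, since the Jordan type of $C_A$ together with the parity rules at $\pm 1$ completely determines the multiset of summands.
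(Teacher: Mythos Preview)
The paper does not prove this theorem; it is stated with attribution to Horn and Sergeichuk and cited from \cite{hor-ser2}, and then used as a point of comparison for the authors' own stratification. So there is no ``paper's own proof'' to compare against.

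Your outline follows the standard Riehm--Horn--Sergeichuk strategy and is broadly on the right track: the cosquare $C_A=A^{-T}A$ is indeed the central invariant, it does transform by similarity under congruence, and your observation $C_A\sim C_A\inv$ is correct and essential. The cosquare computations for $H_m(\lambda)$ and $\Gamma_m$ are also right in spirit.

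That said, two steps are genuinely underspecified. First, the singular reduction via $V_k=\ker A^k\cap\ker(A^T)^k$ is not how the $J_m(0)$ summands are actually peeled off; the correct regularization (as in Horn--Sergeichuk or Thompson) works with the left and right null spaces separately and requires a careful choice of complements so that the congruence splitting respects \emph{both} row and column degeneracies simultaneously. Your filtration does not obviously do this. Second, and more seriously, your uniqueness claim hinges on the assertion that at $\lambda=\pm1$ ``the blocks of parity opposite to that of a $\Gamma_m$ contribution come in pairs.'' This is true, but it is not a consequence of $C_A\sim C_A\inv$ alone (since $(\pm1)\inv=\pm1$ gives no pairing constraint there); it is an additional restriction on which similarity classes actually arise as cosquares, and proving it is one of the substantive steps in the argument. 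Without it, you cannot rule out, e.g., a lone even-size Jordan block at $1$ in $C_A$, which your canonical list cannot realize. Equivalently, you are implicitly assuming that the cosquare is a \emph{complete} congruence invariant for nonsingular $A$ over $\C$, which is precisely the content of Riehm's theorem and needs its own proof.
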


This important theorem does not give a decomposition of the moduli space into strata given by projective orbifolds. Since we had obtained a decomposition of these moduli spaces into projective strata for some small dimensional spaces, we became curious about the relation between our decomposition into projective strata and the stratification
given by the theorem above.

First, we note that our moduli space is projective in the sense that $A\sim uA$ for any $u\in\C^*$. This is easy to see because if we choose $a$ such that
$a^2=u$, then $uA=g*A$ where $g=aI_n$. Next, we study what happens for some small values of $n$. A discussion of this appeared in \cite{fp17}.

For $n=1$, there are only two equivalence classes $[1]$ and $[0]$, which clearly give a decomposition of the $1\times 1$ matrices as a $\P^0$.

Next, for $n=2$, let us consider the decomposition given by the theorem above.
The matrices which appear in the theorem are $H_1(\lambda)=\left[\begin{array}{cc}0&1\\\lambda&0\end{array}\right]$, $\Gamma_2=\left[\begin{array}{cc}0&-1\\1&1\end{array}\right]$ and
$J_2(0)=\left[\begin{array}{cc}0&1\\0&0\end{array}\right]$. We also have to consider the direct sum decompositions given by the diagonal
matrices $\diag(1,1)$, $ \diag(1,0)$ and
$\diag(0,0)$. According to the classification rules, in $H_1(\lambda)$ we must exclude $\lambda=0$ and $\lambda=1$, although the matrix $H_1(0)$ appears since it coincides with $J_2(0)$.

Now, consider the matrices

$$A(p)=\left[\begin{array}{cc}0&p\\-p&0\end{array}\right],\qquad B(p:q)=\left[\begin{array}{cc}1&p\\q&0\end{array}\right],\qquad
.$$

First we establish, that we have covered all possible equivalence classes by our classification.

\begin{thm} Every complex bilinear form can be represented by a matrix of type $A(p)$ or $B(p:q)$.
\end{thm}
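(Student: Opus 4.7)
The plan is to take an arbitrary $2\times 2$ complex matrix $C$ and, via the cogredient action, bring it into one of the two normal forms $A(p)$ or $B(p:q)$. I will split into two cases according to whether or not $C$ is antisymmetric, bypassing the Horn--Sergeichuk decomposition in favor of a direct geometric argument about the associated quadratic form.

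If $C$ is antisymmetric, then $C$ already has the shape $A(c_{12})$ and there is nothing to prove. Otherwise the symmetric part $S = (C + C^T)/2$ is nonzero. The first step is to locate an isotropic vector and an anisotropic vector for the associated quadratic form $q(v) = v^T C v = v^T S v$, and to arrange that they are linearly independent. Since $\C$ is algebraically closed, the nonzero quadratic form $q$ factors as a product of two (not necessarily distinct) linear forms, so its zero locus is either a single line through the origin (when $S$ has rank one) or the union of two distinct lines (when $S$ has rank two). In either case, choosing $e_2$ on such a line and $e_1$ off of the full zero locus yields a basis with $q(e_2) = 0$ and $q(e_1) = \alpha \neq 0$.

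The second step is the cogredient change of basis. Applying $G = [e_1 \mid e_2] \in \GL(2,\C)$ transforms $C$ into a matrix whose $(1,1)$ entry is $\alpha$ and whose $(2,2)$ entry is $0$. A further cogredient action by $\diag(\beta,1)$, where $\beta$ is any square root of $\alpha^{-1}$, normalizes the $(1,1)$ entry to $1$ while preserving the vanishing of the $(2,2)$ entry, producing a matrix of the form $B(p:q)$ where $p$ and $q$ are the (suitably rescaled) pairings $e_1^T C e_2$ and $e_2^T C e_1$. Since the scalar action $C \sim uC$ is already contained in the cogredient action via $G = \sqrt{u}\,I$, no separate rescaling is required.

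The main obstacle is the geometric case analysis showing that an isotropic and an anisotropic vector can always be chosen to span $\C^2$; once this is established, the remaining reductions are straightforward rescalings. One should also verify at the end that the two families $A(p)$ and $B(p:q)$ are genuinely disjoint as orbits (so that the statement is an honest normal-form assertion rather than a mere covering), but this follows from the fact that the symmetric part is a cogredient invariant up to equivalence of quadratic forms, and it vanishes for $A(p)$ while it is nonzero for $B(p:q)$.
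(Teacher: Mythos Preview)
Your proof is correct and follows essentially the same approach as the paper: split into the antisymmetric case (giving $A(p)$) and the non-antisymmetric case, and in the latter find a basis consisting of an anisotropic vector $e_1$ (then rescaled so $q(e_1)=1$) and an isotropic vector $e_2$. The only cosmetic difference is that the paper first picks an anisotropic $u$, completes it to a basis with an arbitrary $v$, and then solves the explicit quadratic $1+x(\beta(e_1,v)+\beta(v,e_1))+x^2\beta(v,v)=0$ to produce the isotropic $e_2=e_1+xv$, whereas you invoke the factorization of the binary quadratic form $q$ to locate the isotropic line(s) directly; these are the same computation viewed from slightly different angles.
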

\begin{proof}
If $\beta$ is a bilinear form on $\C^2$ which is antisymmetric, then clearly it is representable by a matrix of type $A(p)$.  Suppose that
$\beta$ is a bilinear form, which is not antisymmetric.  Since $\beta$ cannot be zero, there is some $u\in\C^2$ such that $\beta(u,u)\ne 0$.
Choose any $v\in\C^2$ so that $\C^2=\langle u,v\rangle$. 
Let $c$ in $\C^*$ be such that $c^{-2}=\beta(u,u)$, and let $e_1=cu$. then $\beta(e_1,e_1)=1$. Now if $\beta(v,v)=0$, then let $e_2=v$. Otherwise
let $e_2=e_1+xv$, and then we compute that $\beta(e_2,e_2)=1+x(\beta(e_1,v)+\beta(v,e_1))+x^2\beta(v,v)$. So there is an $x$ which solves this
quadratic equation, yielding a matrix of type $B(p:q)$.
\end{proof}

Next, we classify the matrices of the form $A(p)$. Actually, it is easy to see that $A(p)$ is equivalent to $A(cp)$ when $c\ne 0$, so $A(p)$ gives
a stratum parametrized by $\P^0$.

The stratum $B(p:q)$ is parametrized by $\P^1/\Sigma_2$, where $\Sigma_2$ acts in the usual manner by interchanging coordinates.
\begin{thm}
$B(p:q)$ is equivalent to $B(cp:cq)$ for $c\in\C^*$, so the stratum $B(p:q)$ has projective coordinates.  Moreover
$B(p:q)$ is equivalent to $B(x:y)$ iff $(p:q)=(x:y)$ or $(p:q)=(y:x)$, which means the the stratum is parametrized by $\P^1/\Sigma_2$.
\end{thm}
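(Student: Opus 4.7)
The plan is to establish the scaling and swap equivalences via explicit congruences, and then prove the converse by extracting a complete projective invariant from the symmetric--antisymmetric decomposition of $B(p:q)$.

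For the scaling part, a direct computation shows that $g=\diag(1,c)$ realizes $g^T B(p:q) g = B(cp:cq)$, so the projective rescaling $(p:q)\mapsto(cp:cq)$ is realized by a cogredient transformation. For the coordinate swap $B(p:q)\sim B(q:p)$, I would exhibit a single matrix that works uniformly in the parameters. The upper-triangular choice $g=\left[\begin{smallmatrix}1 & p+q \\ 0 & -1\end{smallmatrix}\right]$ (with $\det g=-1$) satisfies $g^T B(p:q) g=B(q:p)$ by a short $2\times 2$ multiplication, and this handles the degenerate cases $pq=0$ and $p+q=0$ without any separate argument.

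The main content, and the main obstacle, is the converse: $B(p:q)\sim B(x:y)$ must force $(p:q)$ and $(x:y)$ into the same $\Sigma_2$-orbit of $\P^1$. The strategy is to produce a complete projective invariant. Decompose $B(p:q)=S+K$ into symmetric and antisymmetric parts, $S=\tfrac{1}{2}(B+B^T)$ and $K=\tfrac{1}{2}(B-B^T)$. Under the cogredient action both parts transform as $g^T(\cdot)g$, so $\det S=-(p+q)^2/4$ and $\det K=(p-q)^2/4$ each acquire the common factor $(\det g)^2$. Hence the projective point
\begin{equation*}
J(p,q) \;:=\; \bigl[-(p+q)^2 : (p-q)^2\bigr] \;\in\; \P^1
\end{equation*}
is a genuine cogredient invariant, and it is well defined on all of $\P^1$ because the only common zero of $(p+q)^2$ and $(p-q)^2$ is $(p,q)=(0,0)$, which is excluded from $\P^1$.

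Setting $J(p,q)=J(x,y)$ yields $(p+q)^2(x-y)^2=(p-q)^2(x+y)^2$, a difference of squares that factors as $(qx-py)(px-qy)=0$. The two factors correspond exactly to $(p:q)=(x:y)$ and $(p:q)=(y:x)$ in $\P^1$, giving the converse and thus completing the classification. The main difficulty I anticipate is guessing the invariant $J$; once one thinks to split $B(p:q)$ into its symmetric and antisymmetric pieces and take the projective ratio of their determinants, the remainder reduces to routine algebra.
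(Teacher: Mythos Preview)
Your argument is correct and, for the converse direction, considerably more complete than the paper's. The paper uses the same diagonal matrix $\diag(1,c)$ for scaling, and a unipotent matrix $\left[\begin{smallmatrix}1&-p-q\\0&1\end{smallmatrix}\right]$ for the swap (which actually lands on $B(-q:-p)$ and so needs the scaling step as well; your $\left[\begin{smallmatrix}1&p+q\\0&-1\end{smallmatrix}\right]$ hits $B(q:p)$ directly). For the ``only if'' direction the paper simply says ``it is not hard to see that this covers all possibilities'' and gives no argument, whereas you produce a genuine cogredient invariant $J(p,q)=[-(p+q)^2:(p-q)^2]$ from the determinants of the symmetric and antisymmetric parts, and then factor the resulting equation. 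That is a clean, conceptual proof the paper does not supply; it also makes transparent \emph{why} the answer is $\P^1/\Sigma_2$, since $J$ is visibly the quotient map for the $\Sigma_2$-action.

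One small remark: in this paper $\P^1$ includes the ``generic point'' $(0:0)$, so you should say a word about $B(0:0)$. Your invariant $J$ is undefined there, but the case is trivial: $B(0:0)=\diag(1,0)$ is the unique $B(p:q)$ that is simultaneously symmetric and of rank $1$, and both properties are preserved under congruence, so $B(0:0)\sim B(x:y)$ forces $(x,y)=(0,0)$.
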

\begin{proof}
To show the first statement, we find a matrix $G$ such that $$G^TB(p:q)G=B(cp:cq.$$ For this purpose it is sufficient to take $G=\diag(1,c)$.
To show the second statement, the matrix $G=\left[\begin{array}{cc}1&-p-q\\0&1\end{array}\right]$ transforms $B(p:q)$ into $B(q:p)$. It is not hard
to see that this covers all possibilities.
\end{proof}
Thus we see that the moduli space of $2\times 2$ complex matrices is stratified by projective orbifolds of the type we have been discussing throughout the paper.

Let us compare the stratification above to the decomposition in Theorem (\ref{horn-ser}).

\begin{thm}The following dictionary between the two classification schemes holds.
\begin{enumerate}
\item If $\lambda=p/q$ is not 1, -1, or 0, then $H_1(\lambda)\sim B(p:q)$.
\item $B(1:1)\sim\diag(1,1)$.
\item $B(1:-1)\sim\Gamma_2$.
\item $B(1:0)\sim J_2(0)$.
\item $B(0:0)\sim\diag(1,0)$.
\item $H_1(-1)\sim A(1)$.
\item $A(0)=\diag(0,0)$.
\end{enumerate}
\end{thm}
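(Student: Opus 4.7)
The strategy is to verify each of the seven correspondences by exhibiting, for each one, an explicit $G\in\GL(2,\C)$ for which $G^T X G$ equals the target matrix, matching each item of the Horn-Sergeichuk list with an element of the $A(p)$ or $B(p:q)$ stratification. The cases fall naturally into three groups of increasing difficulty.

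The trivial identifications dispose of Cases 5, 6, and 7 on sight: $B(0:0)=\diag(1,0)$ as matrices, $H_1(-1)=\left[\begin{smallmatrix}0&1\\-1&0\end{smallmatrix}\right]$ coincides with $A(1)$, and $A(0)$ is the zero matrix. For Cases 2, 3, and 4 I would write down a single congruence matrix and verify it by direct multiplication: $G_4=\left[\begin{smallmatrix}1&0\\-1&1\end{smallmatrix}\right]$ sends $B(1:0)$ to $J_2(0)$; the swap $G_3=\left[\begin{smallmatrix}0&1\\1&0\end{smallmatrix}\right]$ sends $B(1:-1)$ to $\Gamma_2$; and since $B(1:1)$ is a nondegenerate symmetric form, complex Gram-Schmidt diagonalizes it to $I_2$, realized for instance by $G_2=\left[\begin{smallmatrix}1&-i\\0&i\end{smallmatrix}\right]$.

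Case 1 is the one with real content. Using the previously established equivalence $B(p:q)\sim B(cp:cq)$, I would rescale so that $q=1$ and reduce the problem to showing $B(\lambda:1)\sim H_1(\lambda)$ for $\lambda\ne 0,\pm 1$. I would look for $G$ of the shape $\left[\begin{smallmatrix}0&b\\c&1\end{smallmatrix}\right]$: the vanishing $(1,1)$-entry of $G$ forces the $(1,1)$-entry of $G^T B(\lambda:1) G$ to be zero automatically, matching $H_1(\lambda)$. The remaining entries of the congruence equation reduce to $bc=1$ and $b\bigl(b+\lambda+1\bigr)=0$, which force $b=-(\lambda+1)$ and $c=-1/(\lambda+1)$. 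Both are well-defined precisely because $\lambda\ne-1$, and a quick check gives $\det G=-1$, so $G\in\GL(2,\C)$.

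The only real obstacle is bookkeeping: one should check that the three values $\lambda=0,1,-1$ excluded from Case 1 are exactly the ones recovered by Cases 4, 2, and 3. This is immediate from the stratification already established. After reducing to $q=1$, the point $\lambda=0$ gives $B(0:1)\sim B(1:0)$ via the $\Sigma_2$-action on the stratum, landing in Case 4; the point $\lambda=1$ is literally the entry of Case 2; and the point $\lambda=-1$ is literally the entry of Case 3. Combined with the preceding theorem that every bilinear form on $\C^2$ is of type $A(p)$ or $B(p:q)$, this confirms that the dictionary is exhaustive as well as correct.
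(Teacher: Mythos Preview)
Your proof is correct and follows the same approach the paper takes, namely exhibiting explicit congruence matrices for each item and then arguing exhaustiveness; the paper in fact omits the matrices entirely, saying only that ``it is easy to construct matrices which carry out these equivalences,'' so your treatment is strictly more detailed. Your closing bookkeeping paragraph matches the paper's own exhaustiveness discussion (that $H_1(1)$ and $H_1(0)$ are excluded from the Horn--Sergeichuk list while $H_1(-1)$ is not, and that the $\Sigma_2$-symmetry $\lambda\leftrightarrow\lambda^{-1}$ on $H_1(\lambda)$ corresponds to $(p:q)\leftrightarrow(q:p)$ on $B(p:q)$).
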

It is easy to construct matrices which carry out these equivalences.  Let us explain why we have now completely identified the two classifications.  Note that
replacing $\lambda$ by $\lambda^{-1}$ in $H_1(\lambda)$ corresponds to interchanging $p$ and $q$ in $B(p:q)$.  The matrices $H_1(1)$ and $H_1(0)$ are excluded
from the first classification scheme, but not $H_1(-1)$, so we had to account for both what $B(1: -1)$ and $H_1(-1)$ are equivalent to in the opposite classifications, and we did. We have accounted for the matrices $\Gamma_2$ and $J_2(0)$, as well as the diagonal matrices which result from bilinear forms which decompose as direct sums. Thus we have accounted for everything on both sides in a unique (up to the symmetries of each type) manner.  One advantage of our approach here is that we get only
2 strata, and we can account for the geometry in a simple manner.

It is easy to see that elements in the family $B(p:q)$ can only deform in a nbd of the point within the family, so there is a 1-parameter miniversal deformation,
which can be given by
$$
B(p:q)_t=B(p:q)+te_{2,2}=\left[\begin{array}{cc}1&p\\q&t\end{array}\right].
$$
The exception to the above is the generic element $B(0:0)$ which has a miniversal deformation
$$
B(0:0)_t=B(0:0)+t_1e_{1,2}+t_2e_{2,2}=\left[\begin{array}{cc}1&t_1\\0&t_2\end{array}\right],
$$
which means it jumps to every element $B(x:y)$ except itself.  In fact, $B_t=\left[\begin{array}{cc}1&t_1\\t_2&0\end{array}\right]$ will give all the deformations of $B(0:0)$, although it is not a miniversal deformation. The reason is that the tangent space is spanned by the matrices
$T_1=\left[\begin{smallmatrix}1&0\\0&0\end{smallmatrix}\right]$ and $T_2=\left[\begin{smallmatrix}0&1\\1&0\end{smallmatrix}\right]$, so the matrices
$\left[\begin{smallmatrix}0&1\\0&0\end{smallmatrix}\right]$, and $\left[\begin{smallmatrix}0&0\\1&0\end{smallmatrix}\right]$ are not linearly independent
from $T_2$.

Even the fact that $T_2$ lies in the tangent space requires a bit of work to see.  If we let $B_t=\left[\begin{array}{cc}t_1&t_2\\t_2&\tfrac{t_2^2}{1+t_1}\end{array}\right]$, then for $G=\left[ \begin {array}{cc} {\frac {1}{\sqrt {1+t_{{1}}}}}&-{\frac {t_{{2}}}{1+t_{{1}}}}\\\noalign{\medskip}0&1\end {array} \right]$, it can be shown that $G^TB_tG=B(0:0)$. Note that $G=G(t_1,t_2)$ is analytic in a nbd of the origin in the
parameter space and $G(0,0)=I$.

Also, a small deformation of the form $A(1)+te_{1,1}$ will give a jump to $B(1: -1)$, and the miniversal deformation is given by
$$A(1)_t=A(1)+t_1e_{11}+t_2e_{12}+t_3e_{22}=\left[\begin{array}{cc}t_1&1+t_2\\-1&t_3\end{array}\right],$$
 where $t_2$ governs smooth deformations in a nbd of $B(1: -1)$.

The generic element $A(0)$ has a more complex
miniversal deformation
$$A(0)_t=A(0)+t_1e_{11}+t_2e_{12}+t_3e_{21}+t_4e_{22}=\left[\begin{array}{cc}t_1&t_2\\t_3&t_4\end{array}\right],$$
which, when $t_4=0$,  is equivalent to $B(t_2:t_3)$ unless $t_1=0$ and $t_2=-t_3$, in which case it is equivalent to $A(1)$ (unless all the parameters vanish),
which means it jumps to every element $B(x:y)$ and to $A(1)$.  Of course, this is obvious from the fact that the zero vector must deform to every vector.

We are mainly interested in the geometry of the moduli space, so our purpose in constructing a miniversal deformation is really to study what the element
deforms to, rather than the abstract purpose of finding a miniversal deformation.  It is often possible to understand what something deforms to from
something simpler than the miniversal deformation.  The geometry depends on understanding how the space is assembled from natural strata, and the jump
deformations give some type of gluing information about the space.  There is a unique stratification which captures  the geometric information, but there
can be many different classification schemes, each of which has a different purpose and flavor.

In the next section, where we study bilinear forms on a 3-dimensional complex vector space, we will not construct miniversal deformations, and will just
give a description of what the elements deform to, and whether they deform smoothly or jump.

\subsection{Bilinear forms on a 3-dimensional complex space}
Using the ideas from \cite{horn-ser}, one could easily determine a classification of the 3-dimensional bilinear forms by the following matrices:
\begin{align*}
B_1(p:q)&=\left[\begin{array}{ccc}1&p&0\\q&0&0\\0&0&1\end{array}\right],
&B_2(p:q)=\left[\begin{array}{ccc}1&p&0\\q&0&0\\0&0&0\end{array}\right],
B_3&=\left[\begin{array}{ccc}0&-1&0\\1&0&0\\0&0&1\end{array}\right],\\
B_4&=\left[\begin{array}{ccc}0&-1&0\\1&0&0\\0&0&0\end{array}\right],
&B_5=\left[\begin{array}{ccc}0&1&0\\0&0&1\\0&0&0\end{array}\right],
B_6&=\left[\begin{array}{ccc}0&0&1\\0&-1&-1\\1&1&0\end{array}\right].
\end{align*}
The matrix $B_5=J_3(0)$ and $B_6=\Gamma_3$ are the two indecomposable matrices occurring in the Horn-Sergeichuk classification, while the matrices $B_1(p:q)$,
$B_2(p:q)$, $B_3$ and $B_4$ correspond to the decomposable bilinear forms. Here we used our explicit classification of 1 and 2 dimensional complex bilinear forms.
It is easy to check that $B_1(0:0)\sim B_2(1:1)$, and other than this identification, there is no overlap.  Moreover, each of the strata given by projective coordinates
$(p:q)$ is parametrized by $\P^1/\Sigma_2$.  Thus it would seem that this decomposition satisfies all the requirements which we have given.  However, our motivation
behind the classification scheme was to apply it in an algebraic setting to determine a stratification of a moduli space of algebras which is compatible with deformations, and the stratification we just gave has some problems in this regard.  Although our classification here is of the moduli space of the action of a group on
a vector space, the deformations correspond to deformations of algebras, so we found a better decomposition.

Consider the matrices below:
\begin{align*}
C_1(p:q)&=\left[\begin{array}{ccc}0&0&q\\0&1&1\\p&0&1\end{array}\right],\quad
C_2=\left[\begin{array}{ccc}0&1&0\\-1&0&0\\0&0&1\end{array}\right],\quad
C_3=\left[\begin{array}{ccc}0&1&1\\-1&0&0\\0&0&0\end{array}\right],\\
C_4&=\left[\begin{array}{ccc}0&1&0\\1&1&0\\0&0&1\end{array}\right],\quad
C_5(p:q)=\left[\begin{array}{ccc}0&0&q\\0&0&0\\p&0&1\end{array}\right],\quad
C_6=\left[\begin{array}{ccc}0&1&0\\-1&0&0\\0&0&0\end{array}\right]
\end{align*}
We first explain how the matrices of type $C$ above relate to the matrices of type $B$ which we first arrived at, and then we will explain 
how the matrices of type $C$ deform, which will justify the new, less obvious stratification of the moduli space.
It is evident that some important changes have occurred in the alignment of the strata. First, let us discuss the easy part. The matrix $C_2$ is clearly equivalent
to the matrix $B_3$, while the matrix $C_6$ is equivalent to the matrix $B_4$.  The family $C_5(p:q)$ is equivalent pointwise to the family $B_2(p:q)$. The family
$C_1(p:q)$ mostly coincides with the family $B_1(p:q)$, with the exception that $C_1(1:1)\not\sim B_1(1:1)$ and $C_1(0:0)\not\sim B_1(0:0)$.
In fact $C_1(1:1)\sim B_6$, which is interesting because $B_6$ is indecomposable.  
Secondly, as we shall discuss later, $C_1(0:0)\sim C_5(1:\gamma)$ where $\gamma$ is an arbitrary primitive
sixth root of unity. On the other hand $B_1(1:1)\sim C_4$, while $B_1(0:0)\sim C_5(1:1)$.  Finally $C_3\sim B_5$.

It may seem strange that only one of the indecomposable matrices gives a separate stratum, while the other one is part of a family. However, it turns out that
decomposability/indecomposability is not preserved under deformations.  The stratification above has the important deformation property, that elements either
deform smoothly along a family, or have jump deformations to another stratum.

Another surprising property is that the generic element in the family $C_1(p:q)$ is really just an ordinary element in another family.  Although this is the first
example we have encountered in this paper, in other classification problems we have studied,
in particular in the study of moduli spaces of algebras, this kind of phenomenon occurs
frequently. In fact, the family $B_1(p:q)$ had a similar overlap in that $B_1(0:0)\sim B_2(1:1)$. It may also seem strange that the element to which $C_1(0:0)$ is
equivalent is $C_5(1:\gamma)$ where $\gamma$ is any primitive sixth root of unity. But this follows from the fact that the families $C_1$ and $C_5$ are parametrized
by $\P^1/\Sigma_2$, so we have $C_5(1:\gamma)\sim C_5(\gamma^5:1)\sim C_5(1:\gamma^5)$, where the middle equivalence arises by multiplying both elements by $\gamma^5$,
and using the fact that $\gamma^6=1$. Since $\gamma$ and $\gamma^5$ are the two primitive sixth roots of unity, this explains why such a number could arise.

\begin{figure}[ht]

\begin{tikzpicture}[]
\node[algebra] at (6,7.5)      (C500)    {\bm{$C_5\of{0:0}$}};
\node[algebra] at (2,6.5)      (C6)    {\bm{$C_6$}};
\node[algebra] at (6,5.5)      (C5pq)    {\bm{$C_5\of{p:q}$}};
\node[algebra] at (6,1)      (C1pq)    {\bm{$C_1\of{p:q}$}};
\node[algebra] at (6,3.5)    (C3)      {\bm{$C_3$}};
\node[algebra] at (1.5,3)  (C2)     {\bm{$C_2$}};
\node[algebra] at (9.5,3)  (C4)     {\bm{$C_4$}};
\node[algebra] at (3.5,3.5)  (C100)    {\bm{$C_1\of{0:0}$}};
\node[algebra] at (4.65,5.25)   (C51-1)   {\footnotesize\bm{$\of{1:-1}$}};
\node[algebra] at (7.5,5.25)   (C511)   {\footnotesize\bm{$\of{1:1}$}};
\node[algebra] at (4,0.6)   (C11-1)   {\footnotesize\bm{$\of{1:-1}$}};
\node[algebra] at (8,0.6)   (C111)   {\footnotesize\bm{$\of{1:1}$}};
\draw[family]         (C5pq)+(0,-0.25)   ellipse (2cm and 0.66cm);
\draw[family]         (C1pq)+(0,-0.25)   ellipse (3cm and 1cm);
\draw[jumpFamily]     (C500) -- (4,5.35);
\draw[jumpFamily]     (C500) -- (8,5.35);
\draw[jumpFamily]     (C3) -- (4.15,5);
\draw[jumpFamily]     (C3) -- (7.85,5);
\draw[jumpFamily]     (C3) -- (3.2,1.1);
\draw[jumpFamily]     (C3) -- (8.8,1.1);
\draw[jumpFamily]     (C100) -- (3,0.8);
\draw[jumpFamily]     (C100) -- (8,1.5);
\draw[jumpSingle]     (C6) -- (C51-1);
\draw[jumpSingle]     (C6) -- (C2);
\draw[jumpSingle]     (C2) -- (C11-1);
\draw[jumpSingle]     (C511) -- (C4);
\draw[jumpSingle]     (C4) -- (C111);
\end{tikzpicture}
\caption{The Moduli Space of Bilinear Forms on $\C^3$}
\end{figure}
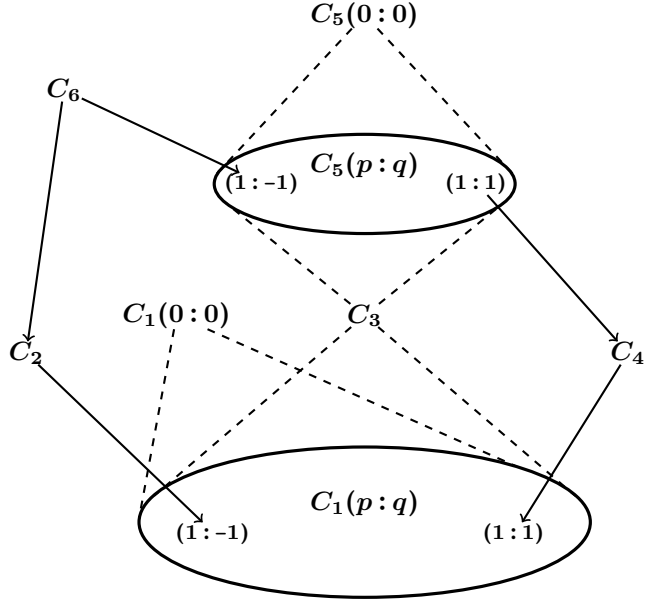

Now we will discuss the deformations of the matrices. First, we note that the numbering has been chosen in such a manner that an element in $C_k$ will only deform along the family $C_k$ (if it is a family) and to elements in families $C_\ell$ for $\ell<k$. The fact that such a numbering is possible is related to the property that if $A$ has a jump deformation to $B$, it cannot happen that $B$ jumps to $A$. A simple way to see this is that the cohomology of $B$ will have smaller dimension than that of $A$, when there is a jump from $A$ to $B$.

Elements in the family $C_1(p:q)$ other than the generic element $C_1(0:0)$ deform only along the family. Since the generic element is equivalent to an ordinary element in the family $C_5(p:q)$, we see from that fact, that the corresponding element $C_2(1,\gamma)$ has jump deformations to every member of the family $C_1(p:q)$ except $C_1(0:0)$, because that is the deformation picture for $C_1(0:0)$.  The generic element in a family always has
jump deformations to every other element in the family.

The element $C_2$ has a jump deformation to $C_1(1: -1)$ and smooth deformations in a nbd of this point. The element $C_3$ has jump deformations to every element in
the family $C_1(p:q)$ except $C_1(0: 0)$.  The element $C_4$ has a jump deformation to $C_1(1:1)$ and smooth deformations in a nbd of this point.

The element $C_5(x:y)$ has jump deformations to every element in the family $C_1(p:q)$ except $C_1(0:0)$. Since $C_1(0:0)$ actually is an ordinary element of the
family $C_5(p:q)$, we don't expect an element $C_5(x:y)$ to jump to it (except of course for the generic element $C_5(0:0)$). 
Also every member of the family $C_5(p:q)$ has a jump deformation
to $C_3$.

On the other hand, only $C_5(1:1)$ and $C_5(0:0)$ have deformations to $C_4$. This is interesting, because the only symmetric bilinear forms are $C_4$, $C_5(1:1)$ and
$C_5(0:0)$. It is easy to see that an element which is not symmetric cannot have a jump deformation to a symmetric element, so the fact that only two elements of the
family $C_5(p:q)$ have jump deformations to $C_4$ is consistent with that observation.

The elements $C_5(1:-1)$ and $C_5(0: 0)$ have jump deformations to $C_2$. Moreover, $C_5(0:0)$ has jump deformations to all other elements of the family $C_5(p:q)$.
Also, $C_5(x:y)$ has smooth deformations along the family.  In fact, there are also smooth deformations of $C_5(x:y)$ in a nbd of any point in $C_1(p:q)$ except $C_1(0:0)$, because
whenever there is a jump deformation to an element of a family, there are smooth deformations in a nbd of that element.

Finally, we address deformations of the element $C_6$.  Note that it is the only antisymmetric element in our space, so it would be impossible for any other element
to have a jump deformation to it.  There is a jump deformation from $C_6$ to $C_5(1:-1)$ and smooth deformations in a nbd of this point, but no other jumps to this family.  Of course, because $C_5(1:-1)$ jumps to $C_2$, so does $C_6$, because of the transitivity of jump deformations, meaning that if $A$ jumps to $B$ and $B$ jumps to $C$, then $A$ must jump to $C$.  There is also a jump deformation from $C_6$ to $C_3$, but not to $C_4$ for reasons listed above. We also have that $C_6$ has
jump deformations to every member of the family $C_1(p:q)$ except $C_1(0:0)$. Finally, $C_6$ has a jump deformation to $C_2$.

\subsubsection{Justification for the new decomposition}

First, we explain the difference between the family $B_1(p:q)$ and $C_1(p:q)$. The shifting of the element corresponding to $(1:1)$ is important.
The two elements in question are $B_1(1:1)=C_4$, while $C_1(1:1)=B_6$. In fact, we saw that both $C_4$ and $C_1(1:1)$ deform in a nbd of the point $C_1(1:1)$, so the
question is which element really should belong to the family.  The answer is given by deformation theory.  Since $C_4$ has a jump deformation to $C_1(1:1)$, not the
other way around, it is $C_4$ which does not belong to the family. In order to find a projective parametrization that included the element which was $B_6$ in the family
$C_1(p:q)$, it was necessary for the generic element to be shuffled.  In fact, every element of the family $C_5(x:y)$ (except $C_5(0:0)$) behaves like the generic element in the family $C_1(p:q)$ would behave, in the sense that they all have jump deformations to the elements of the family $C_1(p:q)$ (except $C_1(0:0)$).

It is important to note that in our decomposition, it was possible to give a numbering convention in such a fashion that elements had jump deformations to elements whose
index was smaller (except for the $(0:0)$ case).  This has been one of the strategies we have been developing in numbering the algebras in our study of moduli spaces,
and it seems a good strategy here as well.

\subsubsection{Computational Techniques}

Let us explain how we obtained the deformations of our elements. First, if the element is represented by a matrix $D$, then we can obtain a matrix
$$
D_t=D+\sum t_{i,j}e_{i,j},
$$
which is the most general linear deformation possible.  In most cases, we set some of the parameters $t_{i,j}$ to be zero, and label them in a simpler way.
Then suppose we want to find a deformation $D_t$ between $D$ and some other element represented by the matrix $B$. We let $G=(g_{i,j})$ be a generic square
matrix (of the right size), and consider the equation
$$
G^T D_t G=B.
$$
We solve the associated system of equations for  the variables $g_{i,j}$ and the other parameters which may occur if $D$ or $B$ are families.  Then we determine
which solutions give rise to matrices whose determinants are not identically zero.  Afterwards, we study the solutions to see what values of the family parameters
are forced to take, and whether the solution is local in the sense that every nbd of the origin in the $t$-space has at least some element in the solution.

In practice, this is pretty easy to implement with a computer algebra system, but even for the $3$-dimensional complex case, if we don't restrict the number of nonzero
parameters $t_{i,j}$, it is not easy to solve on the computer, so it can be a bit tricky to piece out the deformations.

\subsection{Comparison to the Horn-Sergeichuk decomposition}
One advantage of the decomposition given by Horn and Sergeichuk is that it works for all fields and all dimensions.  On the other hand, the decomposition we have given
has a nice relation to the geometry of the complex moduli space, that results and gives a stratification by projective orbifolds of a very simple type. It is also
relatively easy to give a complete picture of the deformations of the elements in the space using our decomposition.
However, we have not determined a general description for the decomposition of
moduli spaces of bilinear forms, as we were able to do for the moduli space of matrices under similarity transformations.

 \bibliographystyle{amsplain}

\begin{thebibliography}{10}

\bibitem{arn}
V.I. Arnold, \emph{On matrices depending on parameters}, Russian Math. Surveys
  \textbf{26} (1971), no.~2, 29--43.

\bibitem{dfs}
A.R.~Dmytryshyn, V.~Futorny, and V.~Sergiechuk, \emph{Miniversal deformations
  of matrices of bilinear forms}, Linear algebra and its applications
  \textbf{336} (2012), 2670--2700.

\bibitem{eek}
A.~Edelman, E.~Elmroth, and B.~K$\mathring{\text{a}}$gstrom, \emph{A geometric
  approach to perturbation theory of matrices and matrix pencils. part i:
  Versal deformations}, Matrix Analysis and Applications \textbf{18} (1997),
  no.~3, 653--692.
  
\bibitem{NR}
A.~Nijenhuis and Richardson R.W., \emph{Deformations of Lie algebra structures}, Journal of Math. and Mech. \textbf{17} (1967), 89--105.

\bibitem{fi}
A.~Fialowski, \emph{An example of formal deformations of Lie algebras}, NATO Conf. on Deform. Theory of Algebras and Appl., Proceedings Kluwer 1988, 375--401.

\bibitem{fifu}
A.~Fialowski and Fuchs D.B., \emph{Construction of Miniversal Deformations of Lie Algebras}, Journal of Funct. Anal. \textbf{161} (1999), 76--110.
  
\bibitem{fipe}
A.~Fialowski and Penkava M., \emph{Versal Deformations of Three Dimensional {L}ie Algebras as L-infinity Algebras}, Comm. in Contemp. Math. \textbf{7} (2005), 145--165.
  
\bibitem{fp20}
\bysame, \emph{The moduli space of complex 5-dimensional
  {L}ie algebras}, Journal of Algebra \textbf{458} (2016), 422--444.

\bibitem{fp17}
\bysame, \emph{The moduli space of 4-dimensional non-nilpotent complex
  associative algebras}, Communications in Contemporary Mathematics
  \textbf{457} (2013), 408--427, arXiv:1309.6050.

\bibitem{fp15}
\bysame, \emph{The moduli space of 4-dimensional nilpotent complex associative
  algebras}, Linear Algebra and its Applications \textbf{457} (2014), 408--427,
  [ps] arXiv:1309.5770.

\bibitem{gabriel}
P.~Gabriel, \emph{Appendix, degenerate bilinear forms}, Journal of Algebra
  \textbf{31} (1974), 67--72.

\bibitem{hor-ser2}
R.~Horn and V.~Sergeichuk, \emph{Congruence of a square matrix and its
  transpose}, Linear Algebra and its Applications \textbf{389} (2004),
  347--353.

\bibitem{horn-ser}
\bysame, \emph{Canonical matrices of bilinear and sesquilinear forms}, Linear
  Algebra and its Applications \textbf{428} (2008), 193--223.

\bibitem{jac}
N.~Jacobson, \emph{{L}ie algebras}, Dover Publ. 1979.

\bibitem{riehm}
C.~Riehm, \emph{The equivalence of bilinear forms}, Journal of algebra
  \textbf{31} (1974), 45--66.

\end{thebibliography}
\providecommand{\bysame}{\leavevmode\hbox to3em{\hrulefill}\thinspace}
\providecommand{\MR}{\relax\ifhmode\unskip\space\fi MR }
\providecommand{\MRhref}[2]{%
  \href{http://www.ams.org/mathscinet-getitem?mr=#1}{#2}
}
\providecommand{\href}[2]{#2}

\end{document}